\documentclass{svproc}

\usepackage{type1cm}        
\usepackage{graphicx}     

\usepackage[bottom]{footmisc}

\usepackage{nicefrac}
\usepackage{mathrsfs}
\usepackage{amsmath,amssymb,amscd} 
\usepackage{mathtools}
\usepackage{comment}
\usepackage{bbm}
\usepackage{caption} 
\usepackage{doi}
\usepackage[linesnumbered,ruled,vlined]{algorithm2e}
\usepackage{tikz}
\usetikzlibrary{decorations.markings,arrows.meta}

\newcommand{\T}{\mathcal{T}}

\newcommand{\X}{\mathcal{X}}

\newcommand{\N}{\mathbb{N}}
\newcommand{\R}{\mathbb{R}}

\newcommand{\one}{\mathbbm{1}}
\newcommand{\PP}{\mathbb{P}}

\newcommand{\ZZ}{\mathcal{Z}}
\newcommand{\Pcal}{\mathcal{P}}
\newcommand{\EE}{\mathbb{E}}

\newcommand{\Lscr}{\mathscr{L}}

\DeclareMathOperator*{\argmin}{argmin}

\usepackage[numbers]{natbib}

\usepackage{newtxmath}

\usepackage{url}

\begin{document}
	\mainmatter          
	\title{Lower and Upper Expected Hitting Times for Weighted Imprecise Markov Chains}
	\titlerunning{Expected Hitting Times for WIMCs}  
	\author{Marco Sangalli \and Thomas Krak}
	\authorrunning{Marco Sangalli et al.}
	\tocauthor{Marco Sangalli, Thomas Krak}
	\institute{Eindhoven University of Technology, Eindhoven,\\
		\email{m.sangalli@tue.nl}}
	
	\maketitle      
	
	\begin{abstract}
		In this paper, we extend hitting times for imprecise Markov chains to the framework of weighted imprecise Markov chains (WIMCs), in which each transition is associated with a strictly positive weight encoded by a matrix $W$.
        Given a convex set $\mathcal{T}$ of admissible transition matrices, we define lower and upper expected hitting times for WIMCs as the infimum and supremum of the (weighted) expected hitting times over $\mathcal{T}$, and we characterise these quantities as the unique solutions of nonlinear fixed-point equations. We show that any weighted hitting time problem can be transformed into an unweighted hitting time problem on an augmented state space, enabling the reuse of existing IMC theory and algorithms. In particular, we are able to adapt known iterative methods for the numerical computation of expected hitting times for WIMCs. 
		\keywords{Imprecise Markov chain, Imprecise Probability, Expected Hitting Times, Weighted Markov chain}
	\end{abstract}
    \section{Introduction and Motivation}
Hitting times are a fundamental quantitative measure for stochastic processes. Given a target set of states $A$, the expected hitting time measures how long a process needs, on average, to reach the target $A$. In classical precise Markov chains this notion is usually understood as the expected number of transitions until first arrival.  

A first generalisation replaces step counts with a matrix of nonnegative {weights} $W$ that assigns a travel time (or cost) to each transition. The resulting object is a weighted Markov chain, and the corresponding hitting time is defined as the accumulated weight along a random trajectory up to the first entrance in $A$.
This naturally arises in many applications, ranging from routing and transportation to robotic surveillance~\cite{Carron2016Weighted, DuanBullo2021}.

A second generalisation addresses model uncertainty: instead of a single transition matrix, one considers a set $\mathcal{T}$ of possible transition matrices, leading to an imprecise Markov chain (IMC)~\cite{DeCooman2009, HermansSkulj2014}. For IMCs, lower and upper hitting times are obtained by minimising or maximising the expected hitting time over all admissible Markov chains. These two generalisations have so far been studied largely independently: weighted travel times for precise chains on the one hand, and (unweighted) expected hitting times for IMCs on the other.

In this paper, we combine these two perspectives by extending the notion of expected hitting times for IMCs to the weighted setting. We introduce and study \emph{weighted imprecise Markov chains} (WIMCs), in which transitions are both uncertain, through a nonempty set $\mathcal{T}$ of transition matrices, and associated with strictly positive weights given by a matrix $W$.
In this context, we introduce lower and upper expected hitting times as the infimum and supremum of the (weighted) hitting time over $\mathcal{T}$, and we characterise these quantities as the unique solutions of nonlinear fixed-point equations.
The key insight is that any weighted hitting time problem can be transformed into an unweighted hitting time problem on an augmented state space. This connection enables us to adapt and reuse existing IMC theory and algorithms to WIMCs: we are able to show that there always exist transition matrices attaining the minimum and maximum in the definition of lower and upper hitting time for WIMCs, that these quantities are characterised as the unique solutions of two nonlinear fixed-point equations, and that we can compute them efficiently adapting Krak's~\cite{krak2021comphit} algorithm for IMCs.

    The remainder of the paper is organised as follows. Section~\ref{sec:preli} recalls basic notation and classical results on hitting times and Markov chains. Section~\ref{sec:weightedMCs} introduces weighted Markov chains and presents key results connecting the unweighted and weighted framework. Section~\ref{sec:wimcs} defines weighted imprecise Markov chains, presents the main fixed-point results, and discusses algorithmic aspects. 
    \section{Preliminaries}\label{sec:preli}
    Let $\X$ be a discrete finite space with cardinality $N\in \N$. A discrete‐time stochastic process on $\X$ is a sequence of random variables \((X_n)_{n\in \N_0}\) taking values in $\X$.
    The process $(X_n)_{n\in \N_0}$ is said to be a \textit{Markov chain} if it satisfies 
\begin{equation}\label{eq:markovprop}
    \PP_X(X_{n+1}=x_{n+1}\mid X_{0:n}=x_{0:n})=\PP_X(X_{n+1}=x_{n+1}\mid X_{n}=x_{n})
\end{equation}
for all $x_0, \dots, x_{n+1} \in \X$ and all $n\in\N_0$, and where we write $\PP_X$ for the probability measure associated to the process $(X_n)_{n\in \N_0}$. The Markov chain $(X_n)_{n\in \N_0}$ is said to be (time-)\textit{homogeneous} if
\begin{equation}
   \PP_X(X_{n+1}=y\mid X_n=x)=\PP_X(X_1=y\mid X_0=x) 
\end{equation}
for all $x,y\in \X$ and all $n\in \N_0$. The $N$ by $N$ stochastic matrix $T$ defined by \[T(x,y)\coloneqq \PP_X(X_1=y\mid X_0=x)\] is the \textit{transition matrix} of the homogeneous Markov chain $(X_n)_{n\in \N_0}$ and uniquely characterises its behaviour up to its initial distribution.
One may view a homogeneous Markov chain as a random walk on the directed graph $G_T=(V,E)$ where $V\coloneqq \X$ and $(x,y)\in E$ if and only if $T(x,y)>0$.

Let $A\subset \X$ be a nonempty target set of states. The \textit{hitting time} of the Markov chain $(X_n)_{n\in \N_0}$ is the random variable defined as
\begin{equation}
      \tau_A :=\inf\{n\ge0: X_n\in A\}\in\N_0\cup\{+\infty\},
\end{equation} 
and, for a starting state $x\in\X$, the \emph{expected hitting time} is
\begin{equation}
  h^T(x):=\EE_{\PP_X}\bigl[\tau_A \mid X_0=x\bigr].
\end{equation}
The quantity $h^{T}(x)$ can be seen as the mean number of steps the chain $(X_n)_{n\in \N_0}$ takes before reaching the target $A$, and is the minimal nonnegative solution of a linear system~\cite{norrisbook1997markov}
\begin{equation}\label{eq:system_hittingtimes}
\begin{cases}
    h^T(x)=0&\text{if $x\in A$,}\\
     h^T(x)=1+\sum\limits_{y\in \X}T(x,y)h^T(y)&\text{if $x\notin A$.}
\end{cases}
\end{equation} 
\section{Weighted Markov Chains}\label{sec:weightedMCs}
Let $T$ be a transition matrix on $\X$ and let $W>0$ be a $N$ by $N$ matrix of weights. The quantity $W(x,y)$ can be understood as the time or cost that the homogeneous Markov chain $(X_n)_{n\in \N_0}$ governed by $T$ takes to transition from $x$ to $y$. We call the pair $((X_n)_{n\in \N_0}, W)$ a \textit{weighted Markov chain}. 
We can naturally define the \textit{hitting time} for a weighted Markov chain as~\cite{Carron2016Weighted, DuanBullo2021}
\begin{equation}
    \eta_A^W:=\sum_{n=1}^{\tau_A}W(X_{n-1},X_{n}).
\end{equation}
Conditioned on the chain starting in $x\in\X$, the \emph{expected hitting time} for a weighted Markov chain is
\begin{equation}
  h^{T,W}(x):=\EE_{\PP_X}\bigl[\eta_A^W \mid X_0=x\bigr].
\end{equation}
This quantity represents the expected travel time from a state $x\in \X$ to $A$. Similarly as before, the expected hitting time satisfies a system of equations~\cite{Carron2016Weighted}:
\begin{equation}\label{eq:recweighthitting}
\begin{cases}
    h^{T,W}(x)=0&\text{if $x\in A$,}\\
     h^{T,W}(x)=\sum\limits_{y\in \X} T(x,y)\left(W(x,y) + h^{T,W}(y)\right)&\text{if $x\notin A$.}
\end{cases}
\end{equation}
The next result follows directly from the linearity in $W$ of $h^{T,W}$, as in Eq. \eqref{eq:recweighthitting}. 
\begin{lemma}\label{lemma:rescale}
    Let $c>0$. Then $h^{T,cW}=ch^{T,W}$.
\end{lemma}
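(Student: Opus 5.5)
The cleanest route works directly from the definition of the weighted hitting time rather than from the system \eqref{eq:recweighthitting}. That system need not have a unique solution, for instance when $A$ is not reached almost surely from some states, so arguing only through it would leave a gap.

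The first step is a pathwise observation. For every trajectory we have
\[
\eta_A^{cW}=\sum_{n=1}^{\tau_A} cW(X_{n-1},X_n)=c\sum_{n=1}^{\tau_A}W(X_{n-1},X_n)=c\,\eta_A^W .
\]
Since the summands are nonnegative, this identity holds pointwise in $[0,+\infty]$. This covers the case $\tau_A=+\infty$, where both sides equal $+\infty$ because $W>0$. It also covers the case $\tau_A=0$, where both sides are $0$.

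The second step is to take conditional expectations given $X_0=x$. For nonnegative extended-real random variables, expectation is positively homogeneous, $\EE[c\,Y]=c\,\EE[Y]$ for $c>0$, with the convention $c\cdot(+\infty)=+\infty$. This follows from monotone convergence applied to simple functions. It gives $h^{T,cW}(x)=c\,h^{T,W}(x)$ for every $x\in\X$, including states where the expected hitting time is infinite.

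The only point needing care is this infinite case. It is exactly the reason for arguing at the level of the random variable $\eta_A^W$, since that argument makes no appeal to uniqueness of solutions of \eqref{eq:recweighthitting}. As a remark, one could also check directly that $c\,h^{T,W}$ solves \eqref{eq:recweighthitting} with weights $cW$, because the right-hand side is jointly linear in $(W,h)$. That observation alone, however, would require a minimality or uniqueness argument to conclude, so I would use the pathwise argument as the proof.
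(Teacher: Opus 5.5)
Your proof is correct, but it takes a different route from the paper's. The paper's justification is only that the right-hand side of \eqref{eq:recweighthitting} is linear in $W$ (jointly in $(W,h)$), so $c\,h^{T,W}$ satisfies the system with weights $cW$. It leaves implicit why this pins down $h^{T,cW}$. You instead work with the random variable itself: you prove the stronger pathwise identity $\eta_A^{cW}=c\,\eta_A^W$ in $[0,+\infty]$, then use positive homogeneity of expectation for nonnegative extended-real variables. What your route buys is that it is self-contained and uses only the definition. It also covers states with infinite expected hitting time without any characterisation of $h^{T,W}$ through \eqref{eq:recweighthitting}. What the paper's route buys is brevity, since it reuses the recursion already stated. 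The gap you identify in the system-based argument is real but easy to close. Suppose one knows, as in the unweighted case, that $h^{T,W}$ is the minimal nonnegative extended-real solution of \eqref{eq:recweighthitting}. Then $f\mapsto cf$ is an order-preserving bijection from the nonnegative solutions for $W$ onto those for $cW$, so minimal solutions correspond and the one-line argument is complete. When $A$ is reachable from every state under $T$, the finite solution is unique and no such care is needed at all.
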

The next result builds a bridge between weighted and unweighted Markov chains, i.e, under the condition that $W\ge 2$, for every weighted Markov chain we can construct a (unweighted) Markov chain with the same expected hitting time.
\begin{proposition}\label{prop:unweighting}
    Let $T$ be a transition matrix on $\X$, let $A\subset \X$ be the target, and let $W\ge 2$. We can construct a space $\X'\coloneqq X\cup \ZZ$ with 
    \[\ZZ\coloneqq \{(x,y)\in \X^2: T(x,y)>0\}\]
     and a transition matrix $T'$ on $\X'$ such that $h^{T'}(x)=h^{T,W}(x)$ for all $x\in \X$, where $h^{T'}$ and $h^{T,W}$ are the vectors of expected hitting times for the unweighted (resp. weighted) Markov chain on $\X$ (resp. $\X'$) governed by $T$ (resp. $T'$).
\end{proposition}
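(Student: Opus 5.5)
Each transition $x\to y$ with $T(x,y)>0$ is replaced by a detour through an auxiliary state $(x,y)\in\ZZ$. The chain moves $x\to(x,y)$ with probability $T(x,y)$, which costs one step. It then stays at $(x,y)$ for a geometric number of steps, and finally jumps to $y$. The total time spent on the detour should have mean exactly $W(x,y)$. The condition $W\ge 2$ is what makes this possible: the step into $(x,y)$ costs $1$, and the final step out of $(x,y)$ costs at least $1$, so we need $W(x,y)-1\ge 1$.

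Concretely, define $T'$ on $\X'=\X\cup\ZZ$ as follows.
\begin{itemize}
\item For $x\in\X$, set $T'(x,(x,y))\coloneqq T(x,y)$ for every $(x,y)\in\ZZ$, and let all other entries of row $x$ be zero.
\item For $z=(x,y)\in\ZZ$, put $p_z\coloneqq 1/(W(x,y)-1)\in(0,1]$, and set $T'(z,y)\coloneqq p_z$ and $T'(z,z)\coloneqq 1-p_z$.
\end{itemize}
Row sums equal $1$ in both cases, so $T'$ is stochastic. The target set stays $A\subset\X$.

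Starting from $z=(x,y)\in\ZZ$, the number of steps until the chain leaves $z$ is geometric with mean $1/p_z=W(x,y)-1$. Hence the minimal nonnegative solution of \eqref{eq:system_hittingtimes} for $T'$ satisfies
\[
h^{T'}(z)=W(x,y)-1+h^{T'}(y).
\]
This can also be seen directly from \eqref{eq:system_hittingtimes}: $h^{T'}(z)=1+(1-p_z)h^{T'}(z)+p_zh^{T'}(y)$, which can be rearranged when $h^{T'}(z)<\infty$. Substituting into the equation for $x\notin A$ gives
\[
h^{T'}(x)=1+\sum_{y}T(x,y)\bigl(W(x,y)-1+h^{T'}(y)\bigr)=\sum_y T(x,y)\bigl(W(x,y)+h^{T'}(y)\bigr),
\]
using $\sum_y T(x,y)=1$. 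So $h^{T'}|_\X$ solves \eqref{eq:recweighthitting}.

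The main obstacle is the possibility of infinite values. The systems can have several solutions, and hitting times may equal $+\infty$, so matching equations is not enough. I would instead argue pathwise. Couple the two chains: from each trajectory $x_0,x_1,\dots$ of $T$, build the trajectory of $T'$ by inserting $(x_{n-1},x_n)$ with independent geometric holding times. This is a valid realisation of $T'$ by construction. The chain under $T'$ reaches $A$ exactly when the $T$-chain does, because states of $\ZZ$ are never in $A$. Moreover, conditionally on the $T$-path, the expected number of $T'$-steps spent on the transition $x_{n-1}\to x_n$ is exactly $W(x_{n-1},x_n)$. Summing over $n\le\tau_A$ and applying monotone convergence (Tonelli) gives
\[
\EE[\tau'_A\mid X'_0=x]=\EE\Bigl[\sum_{n=1}^{\tau_A}W(X_{n-1},X_n)\Bigm| X_0=x\Bigr]=h^{T,W}(x),
\]
and this holds including the case where both sides are $+\infty$. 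This pathwise identity is the actual proof; the fixed-point computation above serves as a sanity check.
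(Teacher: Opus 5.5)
Your proposal is correct. Your construction of $T'$ is the paper's, and your ``sanity check'' is in fact the paper's whole proof. The paper writes the hitting-time system for $T'$ at $x$ and at $z_{xy}$, solves the latter as $h^{T'}(z_{xy})=h^{T'}(y)+W(x,y)-1$, substitutes, observes that $h^{T'}|_\X$ satisfies \eqref{eq:recweighthitting}, and concludes.

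Where you genuinely differ is in making the pathwise coupling the actual argument: you splice independent geometric holding times into the $T$-trajectory and then apply Tonelli. This buys what the paper leaves implicit. The cancellation in \eqref{sanmarino} needs $h^{T'}(z_{xy})<\infty$. Moreover, ``satisfies the same system'' identifies $h^{T'}|_\X$ with $h^{T,W}$ only if that system determines its solution, and in $[0,\infty]$ it does not. For example, a non-target state with a self-loop and an edge into $A$ admits the spurious value $+\infty$, so only minimality is available. Your coupling yields $\EE[\tau'_A\mid X'_0=x]=h^{T,W}(x)$ directly, infinite case included, with no uniqueness claim. The cost is a check you only assert: that the spliced process is a Markov chain with kernel $T'$. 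One sentence suffices, using the Markov property of the $T$-chain and the independence of the holding times.

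The paper's equation-level route has its own advantage: \eqref{sanmarino} is reused verbatim for $\underline{h}^{\T'}$ in the fixed-point theorem. If you preferred to stay analytic, a two-sided minimality comparison would close the paper's gap:
\begin{itemize}
\item Extending $h^{T,W}$ to $\ZZ$ by $h^{T,W}(y)+W(x,y)-1$ gives a nonnegative solution of the $T'$-system, so $h^{T'}|_\X\le h^{T,W}$.
\item Conversely, $h^{T'}(z_{xy})\ge h^{T'}(y)+W(x,y)-1$ always holds. Hence $h^{T'}|_\X$ is a nonnegative supersolution of \eqref{eq:recweighthitting}, and so $h^{T,W}\le h^{T'}|_\X$, since minimal solutions lie below all nonnegative supersolutions.
\end{itemize}
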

\begin{proof}
    Let $T'$ be the transition matrix on $\X'$ defined as 
    \begin{equation}\label{banana}
        \begin{split}
            & 1) \ \ T'(x,z_{xy})=T(x,y),\\
            & 2) \ \ T'(z_{xy},y)={1}/({W(x,y)-1}),\\
            & 3) \ \ T'(z_{xy},z_{xy})=1-{1}/({W(x,y)-1}),
        \end{split}
    \end{equation}
    for all $x,y\in \X$ and $z_{xy}\in \ZZ$. Since $W\ge 2$, $T'$ is a well-defined transition matrix. The following graph represents the situation locally.
     \begin{center}
    \begin{tikzpicture}[
        xscale=1.75,yscale=1.75,
        every node/.style={draw=black,circle,fill=black!70,inner sep=2pt},
        redNode/.style={draw=red, fill=red!50},
        blueNode/.style={draw=blue, fill=blue!55},
        blackNode/.style={draw=black, fill=black},
        every label/.style={rectangle,fill=none,draw=none},
        every edge/.style={draw,bend right,looseness=0.3,
            postaction={
                decorate,
                decoration={
                    markings,
                    mark=at position 0.72 with {\arrow[#1]{Stealth}}
                }
            }
        }
    ]
        \node[blueNode,label=180:$\mathbf{x}$] (x) at (1,50) {};
        \node[blueNode,label=0:$\mathbf{y}$] (y) at (2.2,50) {};
       \node[blueNode,label=180:$\mathbf{x}$] (xp) at (3.7,50) {};
       \node[blueNode,label=270:$\mathbf{z_{xy}}$] (z) at (5.2,50) {};
       \node[blueNode,label=0:$\mathbf{y}$] (yp) at (6.7,50) {};
        \path (x) edge[bend left=0] node[above, draw=none, fill=none, yshift=-8pt] {$T(x,y)$} node[below, draw=none, fill=none, yshift=7.9pt]{$W(x,y)$} (y);
        \path (xp) edge[bend right=0] node[above,draw=none, fill=none, yshift=-8pt, xshift=-3pt] {$T(x,y)$} (z);
        \path (z) edge[relative=false,out=45,in=111,min distance=3.5ex] node[above,draw=none, fill=none, yshift=-20.1pt] {$1-\frac{1}{W(x,y)-1}$} (z);
        \path (z) edge[bend right=0] node[below,draw=none, fill=none, yshift=10pt, xshift=3pt] {$\frac{1}{W(x,y)-1}$} (yp);
    \end{tikzpicture}
    \end{center}
    For all $x\in \X$, the expected hitting time of the Markov chain on $\X'$ governed by $T'$ satisfies
    \begin{align}
    h^{T'}(x)&=1+\sum_{z'\in \X'}T'(x,z')h^{T'}(z')
     = 1+ \sum_{y\in \X} T'(x,z_{xy})h^{T'}(z_{xy}).\label{congobelga}
    \end{align} 
    For all $z_{xy}\in \ZZ$, the expected hitting time satisfies
    \begin{align}
        &h^{T'}(z_{xy}) = 1 + \left(1-\frac{1}{W(x,y)-1}\right)h^{T'}(z_{xy})+\left(\frac{1}{W(x,y)-1}\right)h^{T'}(y)\nonumber\\
        &\quad\Rightarrow \quad h^{T'}(z_{xy})=h^{T'}(y)+ W(x,y)-1.\label{sanmarino}
    \end{align}
    We substitute \eqref{sanmarino} into \eqref{congobelga} and obtain
    \begin{align*}
         h^{T'}(x)&=1+ \sum_{y\in \X} T'(x,z_{xy})h^{T'}(z_{xy})\\
         & = 1+\sum_{y\in \X} T(x,y)\left(h^{T'}(y)+ W(x,y)-1\right)\\
         & = \sum_{y\in \X} T(x,y)\left(W(x,y)+ h^{T'}(y)\right),
    \end{align*}
     which is exactly the system in Eq. \eqref{eq:recweighthitting}. Then, $h^{T'}(x)=h^{T,W}(x)$ for all $x\in \X$.\qed
\end{proof}
The following result extends Proposition \ref{prop:unweighting} to the general case $W>0$.
\begin{corollary}\label{cor:superunweights}
    Let $T$ be a transition matrix on $\X$, let $A\subset \X$ be the target, and let $W>0$. We can construct a space $\X'\coloneqq X\cup \ZZ$ 
    and a transition matrix $T'$ on $\X'$ as in Proposition \ref{prop:unweighting} so that $h^{T,W}(x)=\frac{1}{c}h^{T'}(x)$ for some $c\ge1$ and for all $x\in \X$, 
    where $h^{T'}$ and $h^{T,W}$ are the vectors of expected hitting times for the unweighted (resp. weighted) Markov chain on $\X$ (resp. $\X'$) governed by $T$ (resp. $T'$).  
\end{corollary}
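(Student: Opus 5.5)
The plan is to reduce to Proposition~\ref{prop:unweighting} by rescaling the weights and then to undo the rescaling with Lemma~\ref{lemma:rescale}. Since $\X$ is finite, the matrix $W>0$ has finitely many entries, so $w_{\min}\coloneqq \min_{x,y\in\X} W(x,y)$ exists and is strictly positive. Set
\[
c \coloneqq \max\left\{1,\ \frac{2}{w_{\min}}\right\} \ge 1 .
\]
Then every entry of $cW$ satisfies $cW(x,y)\ge c\,w_{\min}\ge 2$, so $cW\ge 2$ entrywise. The choice of the maximum with $1$ is only there to guarantee $c\ge 1$ as the statement requires. Any $c\ge 2/w_{\min}$ would work for the construction itself.

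Next I apply Proposition~\ref{prop:unweighting} to the transition matrix $T$, the target $A$ and the weight matrix $cW$, which is admissible because $cW\ge2$. This gives the augmented space $\X'=\X\cup\ZZ$ with $\ZZ=\{(x,y):T(x,y)>0\}$ and the matrix $T'$ defined as in \eqref{banana}, with $W$ replaced by $cW$. Concretely,
\[
T'(x,z_{xy})=T(x,y),\qquad T'(z_{xy},y)=\frac{1}{cW(x,y)-1},\qquad T'(z_{xy},z_{xy})=1-\frac{1}{cW(x,y)-1}.
\]
The proposition yields $h^{T'}(x)=h^{T,cW}(x)$ for all $x\in\X$.

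Finally, Lemma~\ref{lemma:rescale} gives $h^{T,cW}=c\,h^{T,W}$. Combining the two identities, $h^{T,W}(x)=\frac1c h^{T'}(x)$ for every $x\in\X$, which is the claim.

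No step here is a real obstacle. The only points to check are that $w_{\min}>0$, which follows from finiteness of $\X$ and strict positivity of $W$, and that $T'$ is a valid stochastic matrix, which Proposition~\ref{prop:unweighting} already guarantees once $cW\ge2$. One could also worry about whether Lemma~\ref{lemma:rescale} and Proposition~\ref{prop:unweighting} identify the correct (minimal) solutions when the hitting times are infinite. Both results are stated in the paper without restriction, so I would invoke them as given.
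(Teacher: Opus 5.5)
Your proposal is correct and follows the paper's proof essentially step for step. The paper also sets $c=\max\{1,2/w_m\}$ with $w_m=\min_{x,y}W(x,y)$, applies the construction of Proposition~\ref{prop:unweighting} to $W'=cW\ge 2$, and uses Lemma~\ref{lemma:rescale} to obtain $h^{T'}(x)=h^{T,cW}(x)=c\,h^{T,W}(x)$ for all $x\in\X$.
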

\begin{proof}
    Let $w_m\coloneqq \min_{x,y\in \X} W(x,y)$ and define $W'\coloneqq cW$, where $c\coloneqq \max\{1,\nicefrac{2}{w_m}\}$. It follows that $W'\ge 2$ and, by Lemma \ref{lemma:rescale}, we have $ch^{T,W}=h^{T,cW}=h^{T,W'}$. 
    By following the construction in the proof of Proposition \ref{prop:unweighting}, we can find a transition matrix $T'$ on $\X'$ such that $h^{T'}(x)=h^{T,W'}(x)=ch^{T,W}(x)$ for all $x\in \X$, which is what we wanted to prove. \qed
\end{proof}

\section{Weighted Imprecise Markov Chains}\label{sec:wimcs}
Instead of a single homogeneous Markov chain $(X_n)_{n\in \N_0}$ with a fixed transition matrix $T$, we now consider a set $\T$ of admissible transition matrices. Using this set, we define the family $\Pcal$ that contains all homogeneous Markov chains whose transition matrix is in $\T$. This set $\Pcal$ is called an \textit{imprecise Markov chain} (IMC)~\cite{DeCooman2009, HermansSkulj2014, hartfiel_seneta_1994}.  
Throughout this paper, we impose some conditions on the set $\T$: we assume that it is nonempty, compact, convex, and has separately specified rows (SSR)~\cite{HermansSkulj2014, krak2019hitting}.  This last property means that the set $\T$ is the Cartesian product of $N=|\X|$ compact and convex sets of probability distributions $\{\T_x\}_{x\in\X}$, one for each state $x\in\X$.

Given a set of transition matrices $\T$ on $\X$, we define the \textit{lower and upper expected hitting times} as follows:
\begin{equation*}
    \underline{h}^\T \coloneqq \inf_{T\in \T} h^T \quad\text{and}\quad  \overline{h}^\T \coloneqq \sup_{T\in \T} h^T.
\end{equation*}
Krak~\cite{krak2021comphit} showed that, under the 
condition
\begin{description}
    \item[(R1):] for all $T\in \T$ and all $x\in \X$ there exists $n\in \N_0$ such that $[T^n\one_A](x)>0$,  
\end{description}
i.e that the process can reach $A$ with every matrix regardless of the starting state, the vector of lower hitting times is finite and the unique solution of a system of equations:
\begin{equation}
    \underline{h}^\T=\one_{A^c}+ \one_{A^c}\cdot \underline{T} \ \underline{h}^\T,
\end{equation}
where $\one$ is the indicator function and $\underline{T}:\R^\X\to \R^\X$ is the lower transition operator associated to $\T$, given by:
\begin{equation}
    [\underline{T}f](x)\coloneqq \inf_{T\in \T} [Tf](x), \text{ for all $f\in \R^\X$.}
\end{equation}
A completely analogous characterisation also holds for upper hitting times.

Let $W>0$ be a matrix of weights. We define the pair made of an IMC $\Pcal$ and a weight matrix $W$ as a \textit{weighted imprecise Markov chain} (WIMC). Given a set of transition matrices $\T$, we define the lower and upper expected hitting times for a WIMC as
\begin{equation}
    \underline{h}^W:= \inf_{T\in \T} h^{T,W}\quad \text{and} \quad \overline{h}^W:= \sup_{T\in \T} h^{T,W}.
\end{equation}
We want to characterise these quantities as the unique solutions of two nonlinear fixed-point equations.
We begin with the following lemma.
\begin{lemma}
    Let $\T$ be a set of transition matrices on $\X$. Then, there exists $T,\Tilde{T}\in \T$ such that 
    \begin{equation}
        h^{T,W}=\underline{h}^W\quad \text{and}\quad h^{\Tilde{T},W}=\overline{h}^W.
    \end{equation}
\end{lemma}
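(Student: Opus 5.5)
The plan is to reduce the statement to the unweighted IMC setting through the augmentation of Proposition~\ref{prop:unweighting} and Corollary~\ref{cor:superunweights}. There is one wrinkle: the augmented space $\ZZ$ in Proposition~\ref{prop:unweighting} depends on the support of $T$. I would therefore take the full set $\ZZ\coloneqq\X\times\X$ of auxiliary states $z_{xy}$. The construction still works when $T(x,y)=0$, because then $z_{xy}$ is simply never entered from $x$, and Eq.~\eqref{sanmarino} holds for every pair. By Lemma~\ref{lemma:rescale} I may assume $W\ge 2$; this is harmless because both $\inf$ and $\sup$ commute with multiplication by $c>0$, and an attaining matrix for $cW$ also attains for $W$.

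On $\X'\coloneqq\X\cup\ZZ$ I define $\T'$ as the set of all $T'$ built from some $T\in\T$ via \eqref{banana}. The rows of $T'$ at the states $z_{xy}$ are fixed, since they depend only on $W$. The row at $x$ is the image of $\T_x$ under the linear embedding $p\mapsto\sum_y p(y)e_{z_{xy}}$. Hence $\T'$ is nonempty, compact and convex, and has separately specified rows. The map $\Phi:T\mapsto T'$ is a bijection onto $\T'$, and by Proposition~\ref{prop:unweighting} it satisfies $h^{\Phi(T)}|_{\X}=h^{T,W}$. The target in $\X'$ is still $A$.

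The problem of attainment now becomes one for unweighted IMCs. It suffices to show that $T'\mapsto h^{T'}$ attains its (componentwise) infimum and supremum on $\T'$ at a single matrix, because the values on $\ZZ$ are determined by those on $\X$ through \eqref{sanmarino}. For the infimum, I would use the lower transition operator to define $\underline h$ as the minimal nonnegative fixed point of $h=\one_{A^c}+\one_{A^c}\underline{T}'h$, obtained as the monotone limit of value iteration from $0$. Compactness and SSR then give, for each row $x$ separately, a $T^*_x\in\T'_x$ attaining $\inf_{p\in\T'_x}p\,\underline h$. Assembling these rows, SSR puts the resulting $T^*$ in $\T'$. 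Then $\underline h$ solves the linear system \eqref{eq:system_hittingtimes} for $T^*$, so $h^{T^*}\le\underline h$ by minimality of $h^{T^*}$. Conversely, every $h^{T}$ is a fixed point of the nonlinear inequality $h\ge\one_{A^c}+\one_{A^c}\underline T'h$, and induction on the value iterates gives $h^T\ge\underline h$. Hence $h^{T^*}=\underline h=\inf_{T'}h^{T'}$, and $T\coloneqq\Phi^{-1}(T^*)$ is the required matrix. This argument needs no finiteness assumption, since it works in $[0,\infty]$.

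The supremum is the step I expect to be the main obstacle. The same row-wise selection with $\overline T'$ gives a $\tilde T^*$ whose $\overline h$ satisfies the linear system. However, minimality now goes the wrong way: $h^{\tilde T^*}$ could be strictly smaller than $\overline h$. I would first try to handle this by splitting into cases.
\begin{itemize}
\item If some $T\in\T$ gives $h^{T,W}(x)=\infty$, then $\overline h(x)=\infty$ is attained. A maximiser can be taken to avoid reaching $A$ from every state where that is possible for some admissible matrix. This is a reachability choice per row, again legitimate by SSR.
\item On the remaining states all values are finite, and $\overline h$ is the smallest fixed point that dominates the iterates. Among the row-wise maximisers, I would choose one that keeps a positive path to $A$; such a choice exists because of the finiteness on these states. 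With this choice the linear system has a unique bounded solution, which equals $\overline h$.
\end{itemize}
Alternatively, if a result such as Krak's is assumed to give attainment for IMCs directly, both directions follow at once through $\Phi$.
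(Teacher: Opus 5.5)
Your overall route is the paper's. Its proof applies Krak et al.'s attainment result (their Theorem~12) to the augmented unweighted IMC on $\X'=\X\cup\ZZ$ and pulls the optimiser back via $T(x,y)=T'(x,z_{xy})$, for the lower and, ``analogously'', the upper hitting time. That is exactly your closing fallback. Two of your additions are worth keeping:
\begin{itemize}
\item Taking $\ZZ=\X\times\X$ and checking that $\T'$ is nonempty, compact, convex and SSR, with $\Phi$ a bijection, is a real tightening. In Proposition~\ref{prop:unweighting} the set $\ZZ$ depends on the support of $T$, so the paper never exhibits a single set $\T'$ on a fixed state space to which the cited theorem can be applied.
\item Your value-iteration proof for the infimum is correct, needs no citation, and works in $[0,\infty]$ without (R1). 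The one unstated step is that the monotone limit is a fixed point of the lower equation; this follows from compactness of the rows by a subsequence argument.
\end{itemize}
Note, however, that the infimum argument never uses the augmentation. Run verbatim with $\underline{\Lscr}^W$ on $\X$, it proves the weighted claim directly. The detour through $\X'$ only pays off when one wants to import an unweighted theorem, as the paper does.

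For the supremum, your case split is a plan rather than a proof.

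\emph{First case.} The crux is a single $\tilde T\in\T$ that avoids $A$ with positive probability simultaneously from every state from which some $T\in\T$ does. You assert this as a per-row reachability choice, but it needs an argument. For example:
\begin{itemize}
\item take the largest $C\subseteq A^c$ such that each $\T_x$ with $x\in C$ contains a row supported in $C$ (unions of such sets again qualify, and SSR lets you assemble the chosen rows into one matrix);
\item use such rows on $C$;
\item use rows of maximal support, which exist by convexity of $\T_x$, on the other states from which $A$ can be avoided.
\end{itemize}

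\emph{Second case.} You must still show that the upper value is finite there and is a fixed point of the upper operator. If you take it to be the value-iteration limit, that limit is a priori a supremum over time-varying row choices. So you also have to check that its infinite values are realised by a single matrix, i.e.\ that it matches your case split. Once this is in place, every rowwise maximiser works, not just a well-chosen one:
\begin{itemize}
\item its rows cannot charge states of infinite value;
\item a closed class $C'$ inside the finite region would give $\pi\overline{h}=1+\pi\overline{h}$ for a stationary distribution $\pi$ on $C'$, which is impossible;
\item hence $A$ is reached almost surely and the linear system has a unique bounded solution.
\end{itemize}

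As written, your upper case therefore rests on the cited IMC result, exactly as the paper's does.
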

\begin{proof}
    By Krak et al.~\cite[Theorem 12]{krak2019hitting}, if $W=1$ there exists a matrix $T\in\T$ that achieves the minimum  in the definition of lower hitting time. Using the construction and notation from Proposition \ref{prop:unweighting} and Corollary \ref{cor:superunweights}, we find that there exists a transition matrix $T'$ on $\X'=\X\cup \ZZ$ achieving the lower hitting time, thus the transition matrix $T$  on $\X$ defined as $T(x,y)=T'(x,z_{xy})$  achieves the minimum  in $\underline{h}^W$. An analogous argument holds for upper hitting times. \qed
\end{proof}

Define the operator $\underline{\Lscr}^W:\R^\X\to \R^\X$ as
\begin{equation}\label{defnlscr}
(\underline{\Lscr}^W f)(x)=
\begin{cases}
    0&\text{if $x\in A$,}\\
    \inf\limits_{T\in \T} \left[ \sum\limits_{y\in \X} T(x,y)\bigl(W(x,y) + f(y) \bigr) \right]&\text{if $x\notin A$,}
\end{cases}
\end{equation}
for all $f\in \R^\X$.
The following theorem states that $\underline{h}^W$ is the unique solution of a nonlinear fixed-point equation.
\begin{theorem}
Let $\T$ be a set of transition matrices on $\X$. 
Then, under condition (R1), $\underline{h}^W$ is the unique solution to
\begin{equation}
    \underline{h}^W=\underline{\Lscr}^W \underline{h}^W.
\end{equation}
\end{theorem}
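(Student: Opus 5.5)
The plan is to reduce the statement to Krak's unweighted characterisation, $\underline{h}^{\T}=\one_{A^c}+\one_{A^c}\cdot\underline{T}\,\underline{h}^{\T}$, on the augmented space of Proposition~\ref{prop:unweighting} and Corollary~\ref{cor:superunweights}. The construction there depends on $T$ only through which pairs $(x,y)$ carry positive probability. To get one augmented space for all matrices at once, I will take $\ZZ\coloneqq\X^2$, i.e.\ a node $z_{xy}$ for every pair, which is harmless because a node with $T(x,y)=0$ is simply never entered. Set $c\coloneqq\max\{1,2/w_m\}$ and $W'\coloneqq cW\ge 2$, so that $\underline{h}^W=\tfrac1c\,\underline{h}^{W'}$ by Lemma~\ref{lemma:rescale}. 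I will also check that $\underline{\Lscr}^{cW}(cf)=c\,\underline{\Lscr}^W f$, which is immediate since $\sum_y T(x,y)=1$. This reduces everything to the case $W\ge 2$.

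Next I will define $\T'$ on $\X'=\X\cup\ZZ$. For $x\in\X$, the row $\T'_x$ is the image of $\T_x$ under $p\mapsto \sum_y p(y)\delta_{z_{xy}}$. For each $z_{xy}$, the row is fixed, given by rules 2) and 3) of \eqref{banana}. Then $\T'$ is nonempty, compact, convex and has SSR, since each row set is a linear image of a compact convex set or a singleton. It also satisfies (R1). Indeed, every $z_{xy}$ moves to $y$ with positive probability, and any path of $T'$ from $x$ projects onto a path of the corresponding $T\in\T$, which reaches $A$ by (R1). The map $T\mapsto T'$ is a bijection between $\T$ and $\T'$, and Proposition~\ref{prop:unweighting} gives $h^{T'}|_{\X}=h^{T,W}$. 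Taking infima yields $\underline{h}^{\T'}|_{\X}=\underline{h}^W$; both infima are attained by the preceding lemma, but pointwise agreement of the infima is all that is needed here.

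Then I will apply Krak's theorem to $\T'$, which says that $\underline{h}^{\T'}$ is finite and is the unique solution of the unweighted fixed-point equation. At a node $z_{xy}$ the row is fixed, so exactly as in \eqref{sanmarino} the equation gives $\underline{h}^{\T'}(z_{xy})=\underline{h}^{\T'}(y)+W(x,y)-1$. At $x\notin A$, the equation reads $\underline{h}^{\T'}(x)=1+\inf_{p\in\T_x}\sum_y p(y)\,\underline{h}^{\T'}(z_{xy})$. Substituting the previous identity and using $\sum_y p(y)=1$ gives exactly $\underline{h}^W=\underline{\Lscr}^W\underline{h}^W$ on $\X\setminus A$; on $A$ both sides are $0$.

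For uniqueness I will argue in the other direction, and this is the step needing the most care. Let $g\in\R^\X$ satisfy $g=\underline{\Lscr}^W g$, and lift it to $g'$ on $\X'$ by setting $g'(z_{xy})\coloneqq g(y)+W(x,y)-1$. Reversing the computation above shows that $g'$ solves the unweighted fixed-point equation for $\T'$. Note that the infimum over $\T'$ at a node $x$ is the infimum over $\T_x$ because of the bijection of rows and SSR. Krak's uniqueness result then gives $g'=\underline{h}^{\T'}$, and hence $g=\underline{h}^W$. The main subtlety is making sure Krak's uniqueness is stated over all of $\R^{\X'}$, not only over nonnegative vectors, and that (R1) truly transfers to $\T'$, including for the nodes $z_{xy}$ that are unreachable under a given $T$.
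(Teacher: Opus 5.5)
Your proposal is correct and follows the same route as the paper. You rescale to $W\ge 2$, pass to the unweighted imprecise chain $\T'$ on $\X'=\X\cup\ZZ$, invoke Krak's characterisation, and substitute $\underline{h}^{\T'}(z_{xy})=\underline{h}^{\T'}(y)+W(x,y)-1$ at the states $x\notin A$.

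It is in fact more careful than the paper's proof in three ways. The paper uses a $T$-dependent $\ZZ$, whereas your $\ZZ=\X^2$ works for all of $\T$ at once. The paper never checks that $\T'$ is compact, convex, SSR and satisfies (R1). And the paper only shows that $\underline{h}^W$ is \emph{a} solution, whereas your lift $g\mapsto g'$ with $g'(z_{xy})=g(y)+W(x,y)-1$ actually transfers Krak's uniqueness to the weighted equation.
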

\begin{proof}
    If $W\ge2$, we follow the construction and notation of Proposition \ref{prop:unweighting} and we obtain an (unweighted) imprecise Markov chain on $\X'=\X\cup\ZZ$ with set of transition matrices $\T'$.
    We know from Krak~\cite[Proposition 2]{krak2021comphit} that there exists a unique $\underline{h}^{\T'}\in \R^{\X'}$ that satisfies 
    \begin{equation*}
    \underline{h}^{\T'}(x)=
        \begin{cases}
            0&\quad\text{if $x\in A$,}\\
            1+\inf\limits_{T'\in \T'} [T'\underline{h}^{\T'}](x) &\quad\text{if $x\notin A$.}
        \end{cases}
    \end{equation*}
    In particular, there exists a matrix $\hat{T}'\in \T'$ satisfying $h^{\hat{T}'}=\underline{h}^{\T'}$. The transition matrix $\hat{T}\in \T$ defined as $\hat{T}(x,y)=\hat{T}'(x,z_{xy})$ for all $x,y\in \X$ achieves the minimum:
    \begin{equation*}
        h^{\hat{T},W}(x)=h^{\hat{T}'}(x)= \underline{h}^{\T'}(x)=\inf_{T'\in \T'}h^{T'}(x)= \inf_{T\in \T} h^{T,W}(x)=\underline{h}^W(x),
    \end{equation*}
    for all $x\in \X$.
    It follows that
    \begin{align*}
        \underline{h}^W(x)&=h^{\hat{T},W}(x)=h^{\hat{T}'}(x)=\underline{h}^{\T'}(x)= 1+\inf_{T'\in \T'} [T'\underline{h}^{\T'}](x)\\
        &=1+\inf_{T'\in \T'} \sum_{y\in \X}T'(x,z_{xy}) \underline{h}^{\T'}(z_{xy})\\
        &=1+\inf_{T'\in \T'} \sum_{y\in \X}T'(x,z_{xy})\left(\underline{h}^{\T'}(y)+W(x,y)-1\right)\\
        &=\inf_{T\in \T} \sum_{y\in \X}T(x,y)\left(\underline{h}^W(y)+W(x,y)\right)=\left[\underline{\Lscr}^W\underline{h}^W\right](x),
    \end{align*}
    for all $x\notin A$. Trivially, the equality $\underline{h}^W(x)=[\underline{\Lscr}^W\underline{h}^W](x)$ also holds for all $x\in A$. We note that $\underline{h}^{\T'}(z_{xy})=\underline{h}^{\T'}(y)+W(x,y)-1$ follows directly from \eqref{sanmarino} in the proof of Proposition \ref{prop:unweighting}. 

On the other hand, if $0<W(x,y)<2$ for some $x,y\in \X$ we rescale all the weights with a constant $c\ge 1$ as in Corollary \ref{cor:superunweights}. We obtain
\begin{align*}
&c\underline{h}^W(x)=\underline{h}^{cW}(x)=\left[\underline{\Lscr}^{cW}\underline{h}^{cW}\right](x)= \left[\underline{\Lscr}^{cW}\left(c\underline{h}^{W}\right)\right](x) \\
&\qquad= \inf_{T\in \T} \sum_{y\in \T} T(x,y)\left( cW(x,y)+  c \underline{h}^{W}(y)\right)\\
&\qquad= c\inf_{T\in \T} \sum_{y\in \T} T(x,y)\left( W(x,y)+  \underline{h}^{W}(y)\right) =c\left[\underline{\Lscr}^W\underline{h}^W\right](x),
\end{align*}
for all $x\notin A$, while equality is trivial for all $x\in A$. \qed
\end{proof}
By defining $\overline{\Lscr}^W$ taking the supremum in place of the infimum in \eqref{defnlscr}, one can show that the upper hitting time $\overline{h}^W$ is the unique solution of a fixed-point equation:
\begin{equation}
    \overline{h}^W=\overline{\Lscr}^W \overline{h}^W.
\end{equation}

\subsection{Computing Lower and Upper Hitting Times}
Krak~\cite{krak2021comphit} proposes efficient iterative algorithms to compute lower and upper hitting times for (unweighted) imprecise Markov chains. Thanks to Proposition \ref{prop:unweighting} and Corollary \ref{cor:superunweights}, we can use the same algorithms to compute lower and upper hitting times for weighted imprecise Markov chains. In particular, the algorithm for lower hitting times alternates between computing the (weighted) hitting time $h^{T_n}$ by solving the linear system in \eqref{eq:recweighthitting} for a matrix $T_n\in \T$, and improving the transition matrix by solving
\[
T_{n+1}(x,\cdot)=\argmin_{T(x,\cdot)\in\T_x} \sum\limits_{y\in \X} T(x,y)\left(W(x,y)+ h^{T_n}(y)\right),
\]
for all $x\in A^c$. This matrix is identical to the one obtained by constructing the space $\X'=\X\cup \ZZ$ and the matrix $T'_n$ as in Corollary \ref{cor:superunweights}, by computing $T'_{n+1}\in \T'$ minimising $\sum_{y\in \X}T'(x,z_{xy})h^{T'_n}(z_{xy})$ for all $x\in \X$ as in Krak~\cite{krak2021comphit}, and finally by setting $T_{n+1}(x,y)=T'_{n+1}(x,z_{xy})$.
Then, the convergence of this algorithm (and the analogous variant for upper hitting times) is guaranteed by Krak~\cite{krak2021comphit}.
\subsubsection*{Acknowledgements}
This work has been partly supported by the PersOn project
(P21-03), which has received funding from Nederlandse Organisatie voor Wetenschappelijk Onderzoek (NWO).

\bibliographystyle{spbasic.bst}
\bibliography{refs.bib}

\end{document}